\definecolor{darkpastelblue}{rgb}{0.47, 0.62, 0.8}
\newcommand{\bc}{\begin{center}}
\newcommand{\ec}{\end{center}}
\newcommand{\bq}{\begin{quote}}
\newcommand{\eq}{\end{quote}}
\newcommand{\bqtn}{\begin{quotation}}
\newcommand{\eqtn}{\end{quotation}}
\newcommand{\beq}{\begin{equation}}
\newcommand{\eeq}{\end{equation}}
\newcommand{\bearr}{\begin{eqnarray}}
\newcommand{\eearr}{\end{eqnarray}}
\newcommand{\bearrn}{\begin{eqnarray*}}
\newcommand{\eearrn}{\end{eqnarray*}}
\newcommand{\bi}{\begin{itemize}}
\newcommand{\ei}{\end{itemize}}
\newcommand{\be}{\begin{enumerate}}
\newcommand{\ee}{\end{enumerate}}
\newcommand{\bthe}{\begin{theorem}}
\newcommand{\ethe}{\end{theorem}}
\newcommand{\blem}{\begin{lemme}}
\newcommand{\elem}{\end{lemme}}
\newcommand{\bsolu}{\begin{solution}}
\newcommand{\esolu}{\end{solution}}
\newcommand{\bexer}{\begin{exercise}}
\newcommand{\eexer}{\end{exercise}}
\newcommand{\ba}{\begin{array}}
\newcommand{\ea}{\end{array}}
\newtheorem{theoreme}{Theorem}[section]
\newtheorem{theorem}[theoreme]{Theorem}
\newtheorem{lemme}[theoreme]{Lemma}
\newtheorem{lemma}[theoreme]{Lemma}
\newtheorem{proposition}[theoreme]{Proposition}
\newtheorem{definition}[theoreme]{Definition}
\newtheorem{corollaire}[theoreme]{Corollary}
\newtheorem{solution}[theoreme]{Solution}
\newtheorem{exercise}[theoreme]{Exercise}
\newcommand{\bdefi}{\begin{definition}}
\newcommand{\edefi}{\end{definition}}
\newcommand{\brk}{\begin{remarque}}
\newcommand{\erk}{\end{remarque}}
\newcommand{\bpp}{\begin{proposition}}
\newcommand{\epp}{\end{proposition}}
\newcommand{\bpf}{\begin{proof}}
\newcommand{\epf}{\end{proof}}
\newcommand{\bcor}{\begin{corollaire}}
\newcommand{\ecor}{\end{corollaire}}
\newcommand{\bsol}{\begin{solution}}
\newcommand{\esol}{\end{solution}}
\theoremstyle{definition}
\newtheorem{remarque}[theoreme]{Remark}
\title{An explicit triangular integral basis for any separable cubic extension of a function field}
\author{Sophie Marques and Kenneth Ward}
\begin{document}
\large
\selectlanguage{english}
\maketitle
\begin{abstract}
We determine an explicit triangular integral basis for any separable cubic extension of a rational function field over a finite field in any characteristic. We obtain a formula for the discriminant of every such extension in terms of a standard form in a tower for the Galois closure.
\end{abstract} 
Let $ \mathbb{F}_q(x)$ be a rational field in one variable, where $ q= p^n$ and $p$ is a prime integer. In this paper, our main theorems determine explicitly a triangular integral basis in any characteristic for any separable extension: Theorem \ref{3primetoqbasis} for characteristic different from $3$ and Theorem \ref{3dividesqbasis} for characteristic $3$. The algorithm proposed only depends only on the factorisation of polynomials, Hasse's algorithm to obtain a global standard form in the sense of Artin-Schreier theory (see \cite[Example 5.8.9]{Vil} or \cite{Has}) and the Chinese remainder theorem. In any characteristic, obtaining generators in a type of standard form in a tower generated by the Galois closure of a cubic appears to be key to explicitly determining this triangular integral basis. We note that this work generalises the results begun in \cite[Theorem 5.17]{MWcubic}, and that our work in characteristic $p \geq 5$ is a consequence of our computations and results appearing in \cite{lotsofpeople, Scheidler}. In characteristic 3, the basis we construct bears some resemblance to that of \cite[Theorem 9]{MadMad}, although the construction is different.

In \S 1, we compute explicitly the discriminant for any separable cubic function field over $\mathbb{F}_q(x)$. In \S 2, we use those computations to determine that a triangular integral basis always exists in terms of our choices of generators, and we determine such a basis explicitly in each characteristic.

\section{Discriminant of a cubic extension} 
Let $\mathfrak{D}_{L/\mathbb{F}_q(x)}$ be the different and $\partial_{L/\mathbb{F}_q(x)}$ the discriminant divisor of $L/\mathbb{F}_q(x)$ \cite[Section 5.6]{Vil}. We have by\cite[Definition 5.6.8]{Vil} that
\begin{equation} \label{normdisc} (\partial_{L/\mathbb{F}_q(x)})_{\mathbb{F}_q[x]}=N_{L/\mathbb{F}_q(x)}((\mathfrak{D}_{L/\mathbb{F}_q(x)})_{\mathbb{F}_q[x]}) = \prod_{\mathfrak{p} \in \mathcal{R}} \mathfrak{p}^{\sum_{\mathfrak{P}|\mathfrak{p}} d(\mathfrak{P}|\mathfrak{p})f(\mathfrak{P}|\mathfrak{p})},\end{equation}
where $\mathcal{R}$ is the set of finite places of $\mathbb{F}_q(x)$ ramified in $L$, $d(\mathfrak{P}|\mathfrak{p})$ is the differential exponent of $\mathfrak{P}|\mathfrak{p}$, and $f(\mathfrak{P}|\mathfrak{p})$ is the inertia degree. Let $\mathcal{O}_{L,x}$ denote the integral closure of $\mathbb{F}_q[x]$ in $L$. If $\omega \in \mathcal{O}_{L,x}$ with minimal polynomial $$X^3 + b X^2 + cX + d = 0,$$ then the discriminant $\Delta (\omega)$ of $\omega$ is defined \cite[Theorem 2.1]{Con} as $$\Delta (\omega) = b^2 c^2 - 4 c^3 - 4b^3 d - 27 d^2 + 18bcd.$$ The ideal $\text{Ind}(\omega)$ of $\mathbb{F}_q[x]$ is defined \cite[p. 789]{lotsofpeople} by $$( \Delta(\omega))_{\mathbb{F}_q[x]} = \text{Ind}(\omega)^2 (\partial_{L/\mathbb{F}_q(x)})_{\mathbb{F}_q[x]}.$$ We suppose that $L/\mathbb{F}_q(x)$ is an impure cubic extension. We split the calculations into the cases $3 \nmid q$ and $3 \mid q$, as the associated generating equations are different \cite[Corollary 1.3]{MWcubic2}.

\subsection{$p \neq 3$} Let $y$ be a generator of $L/\mathbb{F}_q(x)$ which satisfies the equation $y^3 - 3y - a=0$, where $a \in \mathbb{F}_q(x)$, which exists by \cite[Corollary 1.3]{MWcubic2}. We let $\mathfrak{p}$ a place of $K$ and $\mathfrak{P}$ a place of $L$ above $\mathfrak{p}$.  We let $a= \alpha/(\gamma^3 \beta)$, where $(\alpha,\beta\gamma) = 1$, $\beta$ is cube-free, and $\beta = \beta_1 \beta_2^2$, where $\beta_1$ and $\beta_2$ are square-free. Let $\omega = \gamma \beta_1 \beta_2 y$. The minimal polynomial of $\omega$ over $\mathbb{F}_q(x)$ is equal to
$$X^3 - 3\gamma^2 \beta_1^2 \beta_2^2 X- \alpha \beta_1^2 \beta_2,$$ 
and $\omega$ is integral over $\mathbb{F}_q[x]$. By definition, the discriminant of $\omega$ is equal to $$\Delta(\omega) = 108 \gamma^6 \beta_1^6 \beta_2^6 - 27 \alpha^2 \beta_1^4 \beta_2^2 = 27 \beta_1^4 \beta_2^2 (4\gamma^6 \beta_1^2 \beta_2^4 - \alpha^2)= 27 \beta_1^4 \beta_2^2 (4\gamma^6 \beta^2 - \alpha^2).$$  We have (up to constant multiples) \begin{equation} \label{ind} (\beta_1^4 \beta_2^2 (4\gamma^6 \beta^2 - \alpha^2))_{\mathbb{F}_q[x]} =( \Delta(\omega))_{\mathbb{F}_q[x]} = \text{Ind}(\omega)^2 (\partial_{L/\mathbb{F}_q(x)})_{\mathbb{F}_q[x]}.\end{equation} In the following lemma, we determine $(\partial_{L/\mathbb{F}_q(x)})_{\mathbb{F}_q[x]}$ and $\text{Ind}(\omega)$ when $p\neq 3$, for which the cases $p \neq 2$ and $p = 2$ are different.

\begin{lemme} \label{indlemma}
\begin{enumerate}[(a)]
	\item If $p \neq 2$, let $(4\gamma^6 \beta^2 - \alpha^2)=\eta_1 \eta_2^2$, where $\eta_1$ is square-free. Then $$(\partial_{L/\mathbb{F}_q(x)})_{\mathbb{F}_q[x]} = ((\beta_1 \beta_2)^2 \eta_1)_{\mathbb{F}_q[x]} \quad \text{ and }\quad \text{\emph{Ind}}(w) = (\beta_1 \eta _2)_{\mathbb{F}_q[x]}.$$
	\item If $p = 2$, let $r$ be a root of the quadratic resolvent of the polynomial $X^3 -3X-a$. One can find an Artin-Schreier generator $z$ in global standard form for $\mathbb{F}_q(x,r)/ \mathbb{F}_q(x)$ \cite[Example 5.8.8]{Vil}, i.e., an Artin-Schreier generator such that $$z^2 - z =  b,$$ where $b=\frac{1}{a^2} + 1 + \eta^2 - \eta$ for some $\eta \in \mathbb{F}_q(x)$ such that each finite place $\mathfrak{p}$ of $\mathbb{F}_q(x)$ which ramifies in $\mathbb{F}_q(x,z)=\mathbb{F}_q(x,r)$ satisfies 
  $$v_\mathfrak{p}\left(b\right) < 0, \quad \quad \left(v_\mathfrak{p}\left(b \right) ,2\right) = 1,$$ 
  and any unramified place $\mathfrak{p}\mid  ( \alpha )_{\mathbb{F}_q[x]}$ satisfies $$v_\mathfrak{p}\left( b \right) \geq 0.$$ We let $\ell_\mathfrak{p} = -v_\mathfrak{p}\left(b \right)$ for each ramified $\mathfrak{p} \mid (\alpha )_{\mathbb{F}_q[x]}$, and otherwise we let $\ell_\mathfrak{p} = -1$ whenever $\mathfrak{p} \mid  (\alpha )_{\mathbb{F}_q[x]}$ is unramified. Then  $$(\partial_{L/\mathbb{F}_q(x)})_{\mathbb{F}_q[x]} = ((\beta_1 \beta_2)^2 )_{\mathbb{F}_q[x]}  \prod_{\substack{ (v_\mathfrak{p}(b),2) = 1 \\ and \ v_\mathfrak{p}(b)<0 }}  \mathfrak{p}^{\ell_\mathfrak{p} + 1} \quad \text{ and } \quad \text{\emph{Ind}}(\omega) = (\beta_1 )_{\mathbb{F}_q[x]} \prod_{\mathfrak{p} \mid (\alpha )_{\mathbb{F}_q[x]}} \mathfrak{p}^{v_\mathfrak{p}(\alpha) -\frac{1}{2} (\ell_\mathfrak{p} + 1)}.$$

\end{enumerate}
	
\end{lemme}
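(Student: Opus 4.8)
The plan is to compute both quantities by a purely local analysis at each finite place $\mathfrak{p}$ of $\mathbb{F}_q(x)$, reading the ramification off the minimal polynomial $g(X)=X^3-3\gamma^2\beta_1^2\beta_2^2X-\alpha\beta_1^2\beta_2$ of $\omega$ together with the quadratic resolvent of $X^3-3X-a$. Formula \eqref{normdisc} expresses $(\partial_{L/\mathbb{F}_q(x)})_{\mathbb{F}_q[x]}$ as $\prod_\mathfrak{p}\mathfrak{p}^{\sum_{\mathfrak{P}\mid\mathfrak{p}}d(\mathfrak{P}\mid\mathfrak{p})f(\mathfrak{P}\mid\mathfrak{p})}$, so it suffices to pin down, place by place, the splitting type and the differential exponents; once the discriminant divisor is known, \eqref{ind} yields $\mathrm{Ind}(\omega)$ as the square root of $(\Delta(\omega))_{\mathbb{F}_q[x]}(\partial_{L/\mathbb{F}_q(x)})_{\mathbb{F}_q[x]}^{-1}$, the local exponents being even by construction. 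Since $(\alpha,\beta\gamma)=1$ and $\beta_1,\beta_2$ are coprime, the families of primes dividing $\beta_1$, $\beta_2$, $\gamma$, and $4\gamma^6\beta^2-\alpha^2$ are essentially disjoint, which lets me treat each separately.

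For part (a), with $p\neq2,3$, every ramification index lies in $\{2,3\}$ and is prime to $p$, so the ramification is tame and $d(\mathfrak{P}\mid\mathfrak{p})=e(\mathfrak{P}\mid\mathfrak{p})-1$; I only need the ramification indices. At a prime $\mathfrak{p}\mid\beta_1$ (resp.\ $\mathfrak{p}\mid\beta_2$) the Newton polygon of $g$ is a single segment of slope $-2/3$ (resp.\ $-1/3$), forcing $\mathfrak{p}=\mathfrak{P}^3$ with $e=3$, $f=1$ and contributing $\mathfrak{p}^2$; this produces the factor $((\beta_1\beta_2)^2)_{\mathbb{F}_q[x]}$. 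Writing $4\gamma^6\beta^2-\alpha^2=\eta_1\eta_2^2$, the quadratic subfield of the Galois closure is $\mathbb{F}_q(x,\sqrt{\eta_1})$, whose ramified primes are exactly the $\mathfrak{p}\mid\eta_1$; for such $\mathfrak{p}$ (automatically prime to $\alpha\beta\gamma$) the inertia in the closure contains a transposition, giving $e=2$, $d=1$ and the factor $(\eta_1)_{\mathbb{F}_q[x]}$, while reducing $g$ modulo $\mathfrak{p}$ exhibits a double but not triple root, ruling out $e=3$. At $\mathfrak{p}\mid\gamma$ (prime to $\alpha\beta$) and at $\mathfrak{p}\mid\eta_2$ not dividing $\eta_1$, the reduction of $g$ is separable, so $\mathfrak{p}$ is unramified. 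Assembling gives $(\partial_{L/\mathbb{F}_q(x)})_{\mathbb{F}_q[x]}=((\beta_1\beta_2)^2\eta_1)_{\mathbb{F}_q[x]}$, whence $\mathrm{Ind}(\omega)^2=(\beta_1^2\eta_2^2)_{\mathbb{F}_q[x]}$ and $\mathrm{Ind}(\omega)=(\beta_1\eta_2)_{\mathbb{F}_q[x]}$.

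For part (b), with $p=2$, the tame part is unchanged: the Newton-polygon argument at $\mathfrak{p}\mid\beta_1\beta_2$ still gives $e=3$ (tame, as $p\neq3$) and the factor $((\beta_1\beta_2)^2)_{\mathbb{F}_q[x]}$. The essential difference is that $4\gamma^6\beta^2-\alpha^2$ degenerates to $-\alpha^2$ in characteristic $2$, so the quadratic resolvent is now the Artin--Schreier extension $z^2-z=b$ with $b=a^{-2}+1+\eta^2-\eta$, put in Hasse's global standard form. The resolvent ramifies at $\mathfrak{p}$ precisely when $v_\mathfrak{p}(b)<0$ is odd, and standard Artin--Schreier theory then gives it different exponent $\ell_\mathfrak{p}+1$ there, where $\ell_\mathfrak{p}=-v_\mathfrak{p}(b)$. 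Transporting this to $L$ through the $S_3$-Galois closure, the inertia at such $\mathfrak{p}$ is the wild group $\langle\tau\rangle\cong C_2$ generated by a transposition; via the conductor--discriminant formula $(\partial_{L/\mathbb{F}_q(x)})_{\mathbb{F}_q[x]}$ equals the Artin conductor of the two-dimensional standard character of $S_3$, whose restriction to $\langle\tau\rangle$ contains the nontrivial character exactly once, so the local exponent equals the break $\ell_\mathfrak{p}+1$ of the resolvent. This yields $(\partial_{L/\mathbb{F}_q(x)})_{\mathbb{F}_q[x]}=((\beta_1\beta_2)^2)_{\mathbb{F}_q[x]}\prod\mathfrak{p}^{\ell_\mathfrak{p}+1}$. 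Finally, in characteristic $2$ the term $-4c^3$ vanishes and $\Delta(\omega)=-27d^2=\alpha^2\beta_1^4\beta_2^2$, so \eqref{ind} gives $\mathrm{Ind}(\omega)$ the exponent $v_\mathfrak{p}(\alpha)-\tfrac12(\ell_\mathfrak{p}+1)$ at each $\mathfrak{p}\mid\alpha$ (with the convention $\ell_\mathfrak{p}=-1$ at unramified such $\mathfrak{p}$), together with $\beta_1$, which is the claimed formula.

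The hard part will be part (b): because the genuine discriminant invariant collapses in characteristic $2$, the ramification of the non-Galois cubic $L$ cannot be read from a radical and must instead be imported from the Artin--Schreier class $b$ of the resolvent through the $S_3$-closure. The delicate points are verifying that the inertia at each wild prime is exactly a transposition with a single ramification break at $\ell_\mathfrak{p}$ (so that the standard-form pole order of $b$ transfers faithfully to the different of $L$), and checking that Hasse's standard form forces $\ell_\mathfrak{p}$ odd, so that $\tfrac12(\ell_\mathfrak{p}+1)$ is an integer and the index exponents $v_\mathfrak{p}(\alpha)-\tfrac12(\ell_\mathfrak{p}+1)$ are nonnegative.
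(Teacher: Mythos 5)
Your overall strategy coincides with the paper's: both proofs work place by place, determine the splitting type, differential exponent and inertia degree at each finite prime, and then feed these into \eqref{normdisc} and \eqref{ind} to extract first the discriminant divisor and then $\mathrm{Ind}(\omega)$ as a square root. Where the paper simply cites the ramification criteria and differential exponents from its companion article (Corollary 3.15 and Lemma 3.23 of that reference), you supply independent justifications: Newton polygons at $\mathfrak{p}\mid\beta_1\beta_2$, tameness to get $d=e-1$, and in part (b) a transport of the Artin--Schreier conductor of the resolvent through the $S_3$-closure via the conductor--discriminant formula. That last piece is a genuinely different (and attractive) route to the exponent $\ell_\mathfrak{p}+1$, but as you yourself note it rests on the unproved assertion that inertia at each wild prime is exactly a transposition with the same ramification filtration as in the resolvent; the paper avoids this by quoting the wild differential exponent for the cubic directly. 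So the proposal is a plan with the decisive wild computation still outstanding.

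There is also one concrete error in part (a). You claim that at a prime $\mathfrak{p}\mid\eta_2$ with $\mathfrak{p}\nmid\eta_1$ ``the reduction of $g$ is separable, so $\mathfrak{p}$ is unramified.'' This is false: $\Delta(g)=27\beta_1^4\beta_2^2\eta_1\eta_2^2$, so $\mathfrak{p}\mid\eta_2$ forces $\Delta(g)\equiv 0\bmod\mathfrak{p}$ and the reduction of $g$ has a repeated root. Indeed these are exactly the primes (together with $\beta_1$) at which the index $\mathrm{Ind}(\omega)$ is supported, i.e.\ at which $\mathbb{F}_q[x][\omega]$ fails to be maximal, so inseparability of the reduction is unavoidable there and cannot be used to conclude anything. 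The conclusion you want is still true, but it needs a different argument: since $v_\mathfrak{p}(\eta_1\eta_2^2)$ is even, the quadratic resolvent field is unramified at $\mathfrak{p}$, so inertia in the Galois closure lies in $A_3$; and since $\mathfrak{p}\nmid\beta\gamma$ gives $v_\mathfrak{p}(a)\geq 0$, total ramification is excluded, so inertia is trivial and $\mathfrak{p}$ is unramified in $L$. (This is the same Galois-closure mechanism you already invoke at $\mathfrak{p}\mid\eta_1$, and it is precisely the ``only if'' half of the cited criterion $d(\mathfrak{P}\mid\mathfrak{p})=1\Leftrightarrow v_\mathfrak{p}(4\gamma^6\beta^2-\alpha^2)$ odd that the paper relies on.) Without repairing this step the formula $(\partial_{L/\mathbb{F}_q(x)})_{\mathbb{F}_q[x]}=((\beta_1\beta_2)^2\eta_1)_{\mathbb{F}_q[x]}$, and hence the value of $\mathrm{Ind}(\omega)$, is not established.
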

\begin{proof}
        \begin{enumerate} \item When $p \neq 2$ we have $d(\mathfrak{P}|\mathfrak{p}) = 2$ whenever  $e(\mathfrak{P}|\mathfrak{p}) = 3$, which occurs when $v_\mathfrak{p}(a) < 0$ and $(v_\mathfrak{p}(a),3)=1$. We also have $d(\mathfrak{P}|\mathfrak{p}) = 1$ if, and only if, $v_\mathfrak{p}(4\gamma^6 \beta^2- \alpha^2)$ is odd (see \cite[Corollary 3.15]{MWcubic2}). In either of these cases, only one place $\mathfrak{P}|\mathfrak{p}$ above has $d(\mathfrak{P}| \mathfrak{p})
       \neq 0$, and for this place $\mathfrak{P}$, $f(\mathfrak{P}|\mathfrak{p}) = 1$. Thus we obtain from \eqref{normdisc} that $$(\partial_{L/\mathbb{F}_q(x)})_{\mathbb{F}_q[x]} = \prod_{\substack{ \mathfrak{p} |( \beta )_{\mathbb{F}_q[x]}}} \mathfrak{p}^2  \prod_{\substack{ \mathfrak{p} |(\alpha^2 - 4\gamma^3 \beta^2 )_{\mathbb{F}_q[x]}\\ (v_\mathfrak{p}(\alpha^2 - 4\gamma^3 \beta^2),2) = 1}} \mathfrak{p} = ((\beta_1 \beta_2)^2 \eta_1)_{\mathbb{F}_q[x]}.$$ Thus we obtain from \eqref{ind} that \begin{align*} \text{Ind}(\omega)^2 &= (\beta_1^4 \beta_2^2 \eta_1 \eta_2^2)_{\mathbb{F}_q[x]} (\partial_{L/\mathbb{F}_q(x)})_{\mathbb{F}_q[x]} ^{-1} \\&=( \beta_1^4 \beta_2^2 \eta_1 \eta_2^2   ( \beta_1 \beta_2)^{-2} \eta_1^{-1} )_{\mathbb{F}_q[x]} \\& = (\beta_1 \eta_2)^2_{\mathbb{F}_q[x]} . \end{align*}
Therefore, $\text{Ind}(\omega) = (\beta_1 \eta_2)_{\mathbb{F}_q[x]} $.
  \item 	
  When $p=2$, then again $d(\mathfrak{P}|\mathfrak{p}) = 2$ whenever  $e(\mathfrak{P}|\mathfrak{p}) = 3$, which occurs when $v_\mathfrak{p}(a) < 0$ and $(v_\mathfrak{p}(a),3)=1$.  Let $b$ be as in the statement of the lemma. By Artin-Schreier theory \cite[Example 5.8.8]{Vil}, only finite places $\mathfrak{p}\mid  ( \alpha )_{\mathbb{F}_q[x]}$ and possibly the place at infinity can be ramified in $\mathbb{F}_q(x,r)/\mathbb{F}_q(x)$. More precisely, by \cite[Lemma 3.23]{MWcubic2}, we know that $ e( \mathfrak{P}| \mathfrak{p})=2$ if and only if  $v_\mathfrak{p}\left(b\right) < 0, \ \left(v_\mathfrak{p}\left(b \right) ,2\right) = 1$.
  Then for such $\mathfrak{p}$, we have (Ibid.) that $$d(\mathfrak{P}|\mathfrak{p}) = -v_\mathfrak{p}\left(b\right)+1 = \ell_\mathfrak{p}+1,$$ and similarly for $\mathfrak{p} \mid (\alpha)_{\mathbb{F}_q[x]}$ unramified by definition of  $\ell_\mathfrak{p}$. Once again, in either ramified case, there is only one place $\mathfrak{P}|\mathfrak{p}$ with $d(\mathfrak{P}| \mathfrak{p})\neq 0$, and for this place, $f(\mathfrak{P}|\mathfrak{p})=1$ for each such place. 
  Thus, we obtain from \eqref{normdisc} that
   $$(\partial_{L/\mathbb{F}_q(x)})_{\mathbb{F}_q[x]} = \prod_{\substack{ \mathfrak{p} | (\beta)_{\mathbb{F}_q[x]} }} \mathfrak{p}^2  \prod_{\substack{\mathfrak{p} | ( b)_{\mathbb{F}_q[x]}\\ (v_\mathfrak{p}(b),2) = 1 \\ and \ v_\mathfrak{p}(b)<0 }} \mathfrak{p}^{\ell_\mathfrak{p} + 1} = ((\beta_1 \beta_2)^2 )_{\mathbb{F}_q[x]}  \prod_{\substack{ \mathfrak{p} | ( b)_{\mathbb{F}_q[x]}\\(v_\mathfrak{p}(b),2) = 1 \\ and \ v_\mathfrak{p}(b)<0 }}  \mathfrak{p}^{\ell_\mathfrak{p} + 1}.$$
  From \eqref{ind}, we then find $$(\beta_1^4 \beta_2^2\alpha^2 )_{\mathbb{F}_q[x]}= (\Delta(\omega) )_{\mathbb{F}_q[x]}= \text{Ind}(\omega)^2 (\partial_{L/\mathbb{F}_q(x)})_{\mathbb{F}_q[x]} = \text{Ind}(\omega)^2 (\beta_1 \beta_2)_{\mathbb{F}_q[x]}^2 \prod_{\mathfrak{p} \mid (\alpha )_{\mathbb{F}_q[x]}} \mathfrak{p}^{\ell_\mathfrak{p}+1},$$ and hence (note $\frac{1}{2}(\ell_\mathfrak{p} + 1) \in \mathbb{Z}$ since $(\ell_{\mathfrak{p}}, 2)=1$) that $$\text{Ind}(\omega) = (\beta_1 )_{\mathbb{F}_q[x]} \prod_{\mathfrak{p} \mid (\alpha )_{\mathbb{F}_q[x]}} \mathfrak{p}^{v_\mathfrak{p}(\alpha) - \frac{1}{2}(\ell_\mathfrak{p} + 1)}.$$
  
 \end{enumerate}
 \end{proof}
\subsection{$p=3$}

We now treat the case where $p=3$. Let $y_1$ be a generator of $L/\mathbb{F}_q(x)$ such that $y_1^3 +a_1 y_1 + a_1^2=0$, which exists by \cite[Corollary 1.3]{MWcubic2}. We first establish the existence of a generator in standard form at any finite place.  This will allow us to use \cite[Theorem 3.19/20]{MWcubic2} to determine $(\partial_{L/\mathbb{F}_q(x)})_{\mathbb{F}_q[x]}$, which will be useful for finding an integral basis of  $\mathcal{O}_{L,x}$. 

\begin{lemme} \label{3standard} There exists $b \in \mathbb{F}_q(x)$ and generator $z$ of $L/\mathbb{F}_q(x)$ with minimal polynomial $$X^3 + b X + b^2 = 0$$ for which, for all finite places of  $\mathbb{F}_q(x)$,$$ v_\mathfrak{q}(b) \geq 0, \quad \text{ or } \quad v_\mathfrak{q}(b) < 0 \text{ and } (v_\mathfrak{q}(b) ,3) = 1.$$ \end{lemme}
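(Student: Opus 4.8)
The plan is to start from the given generator $y_1$ with $y_1^3 + a_1 y_1 + a_1^2 = 0$ and to adjust the coefficient $a_1$, one finite place at a time, until no negative pole order is divisible by $3$; this is the exact cubic analogue of putting an Artin--Schreier generator into Hasse's global standard form, as was carried out for the quadratic resolvent in the case $p=2$ of Lemma \ref{indlemma}. First I would pin down the admissible changes of generator. Since $L = \mathbb{F}_q(x)(y_1)$, any other generator is $z = A + B y_1 + C y_1^2$ with $A,B,C \in \mathbb{F}_q(x)$; requiring that the minimal polynomial of $z$ have no $X^2$ term forces $C\,\mathrm{Tr}(y_1^2) = 0$, and since $\mathrm{Tr}(y_1^2) = a_1 \neq 0$ in characteristic $3$ we obtain $C = 0$. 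Writing $z = A + B y_1$ and using $(z-A)^3 = z^3 - A^3$, the minimal polynomial of $z$ becomes $X^3 + (a_1 B^2) X + (-A^3 - a_1 B^2 A + a_1^2 B^3)$, so $z$ is again in standard form with $b = a_1 B^2$ precisely when
\[ A^3 + b A = b^2\left(\tfrac{1}{B} - 1\right). \]

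Next I would carry out the local reduction. At a finite place $\mathfrak{q}$ with $v_{\mathfrak{q}}(a_1) = -3m < 0$, the substitution above changes the coefficient's valuation to $v_{\mathfrak{q}}(b) = v_{\mathfrak{q}}(a_1) + 2 v_{\mathfrak{q}}(B)$; since $2$ is invertible modulo $3$, a suitable choice of $v_{\mathfrak{q}}(B)$ renders $v_{\mathfrak{q}}(b)$ either non-negative or negative and coprime to $3$, exactly as Hasse's algorithm cancels a pole of order divisible by the characteristic by subtracting an appropriate $p$-th-power term. The operator $\wp_b(A) := A^3 + bA$ appearing in the compatibility equation is additive and $\mathbb{F}_3$-linear, separable (its derivative is $b \neq 0$), and injective on $\mathbb{F}_q(x)$ in the non-Galois case, its kernel $\{0, \pm\sqrt{-b}\}$ meeting $\mathbb{F}_q(x)$ only in $0$; hence the corrected generator is determined by $B$ whenever it exists.

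Finally I would globalize. Using the Chinese remainder theorem (equivalently weak approximation) I would build a single $B \in \mathbb{F}_q(x)$ realizing the prescribed local orders at the finitely many bad places simultaneously, solve the compatibility equation for the corresponding $A$, and verify that the pole orders strictly decrease so that the procedure terminates and that no new finite place acquires a pole order divisible by $3$. The place at infinity is excluded throughout, since the statement concerns only finite places.

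The hard part will be this globalization, namely guaranteeing that the compatibility equation $\wp_b(A) = b^2(B^{-1} - 1)$ admits a solution $A \in \mathbb{F}_q(x)$ for a choice of $B$ that fixes every bad place at once: this is an Artin--Schreier-type solvability obstruction for the additive operator $\wp_b$, and controlling it without spoiling the already-good places is precisely what the combination of Hasse's algorithm and the Chinese remainder theorem must deliver, in parallel with the explicit standard-form resolvent generator of the case $p=2$ in Lemma \ref{indlemma}. I expect the cleanest way to package this obstruction is to descend to the quadratic resolvent $\mathbb{F}_q(x)(\sqrt{-a_1})$, over which $L$ becomes a cyclic cubic and Hasse's algorithm applies verbatim, and then to transfer the resulting standard form back down to $\mathbb{F}_q(x)$ using the ramification dictionary of \cite{MWcubic2}.
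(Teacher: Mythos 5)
Your setup is sound and your overall strategy (iterated local normalisation of the coefficient, glued together by approximation) is in the same spirit as the paper's, but there is a genuine gap exactly where you locate ``the hard part,'' and that gap is the whole content of the lemma. You parametrise the standard-form generators as $z = A + By_1$ with $b = a_1B^2$ subject to the compatibility equation $A^3 + bA = b^2(B^{-1}-1)$, and then assert that ``a suitable choice of $v_\mathfrak{q}(B)$ renders $v_\mathfrak{q}(b)$ either non-negative or negative and coprime to $3$.'' But $B$ is not a free parameter: it is constrained by the solvability of the compatibility equation over $\mathbb{F}_q(x)$, so you cannot prescribe $v_\mathfrak{q}(B)$ at will, even at a single place. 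The parametrisation the paper invokes (\cite[Lemma 1.11]{MWcubic2}) shows that the admissible $B$ are exactly those of the form $B = j + (w^3 + a_1 w)/a_1^2$ with $w \in \mathbb{F}_q(x)$, i.e.\ $j$ plus a value of the additive operator $w \mapsto w^3 + a_1 w$ scaled by $a_1^{-2}$; whether such a $B$ can be given a prescribed valuation at $\mathfrak{p}$ is precisely the question to be answered, not a consequence of $2$ being invertible modulo $3$.

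The paper resolves this by working with the free parameter $w$ directly rather than with $B$: choosing $w = \alpha w_1$ with $v_\mathfrak{p}(\alpha) = 2v_\mathfrak{p}(a_1)/3$ and using that the residue field is perfect (so the cube map is surjective --- this is where $p=3$ enters), one forces cancellation of the leading terms of $ja_1^2$ and $w^3$. This only yields $v_\mathfrak{p}(B) \geq 1$, hence $v_\mathfrak{p}(a_2) > v_\mathfrak{p}(a_1)$, not the exact target valuation in one step; the construction must therefore be iterated at $\mathfrak{p}$ until the valuation becomes nonnegative or prime to $3$, with separate bookkeeping showing that all other finite places remain nonnegative or unchanged so that the global process terminates. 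Your fallback --- descending to the quadratic resolvent field, applying Hasse's algorithm to the cyclic cubic there, and transferring back --- is plausible in outline but is only a sketch: the descent of the standard form back to $\mathbb{F}_q(x)$ is itself nontrivial and is essentially what the cited machinery of \cite{MWcubic2} provides. As written, the proposal does not establish the lemma.
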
 

\begin{proof} We prove the lemma by way of the following algorithm. Let $\mathfrak{p}$ be a finite place of $\mathbb{F}_q(x)$ for which $v_\mathfrak{p}(a_1) < 0$ and $3 \mid v_\mathfrak{p}(a_1)$. We will find an element $a \in \mathbb{F}_q(x)$ and a generator $y$ of $L/\mathbb{F}_q(x)$ with minimal polynomial $$X^3 + a X + a^2 = 0,$$ and such that \\
    \begin{itemize} \item $v_\mathfrak{p}(a) \geq 0$ or $v_\mathfrak{p}(a) < 0$ and $(v_\mathfrak{p}(a),3) = 1$; \item all finite places $\mathfrak{q} \neq \mathfrak{p}$ such that $v_\mathfrak{q}(a_1) < 0$ also have $v_\mathfrak{q}(a) = v_\mathfrak{q}(a_1)$; \item all finite places $\mathfrak{q}$ such that $v_\mathfrak{q}(a_1) \geq 0$ also have $v_\mathfrak{q}(a) \geq 0$. \end{itemize} 
    \vspace{4mm}
  By \cite[Lemma 1.11]{MWcubic2}, any other generator $y_2$ of $L/\mathbb{F}_q(x)$ with a minimal equation of the same form $y_2^3 +a_2 y_2 +a_2^2=0$ is such that $y_2 =-\beta (\frac{j}{a_1}y_1+ \frac{1}{a_1} w_2)$ for some $w_2 \in K$, and we have $$a_2  = \frac{(ja_1^2 + (w_2^3 + a_1 w_2) )^2}{a_1^3}.$$ Let $\lambda \in \mathbb{F}_q(x)$ such that $v_\mathfrak{p}(\lambda) = 2v_\mathfrak{p}(a_1)/3$. By strong approximation \cite[Chap. 2.15]{CasFro}, there exists $\alpha \in \mathbb{F}_q(x)$ such that \\

\begin{itemize} \item $v_\mathfrak{p}(\alpha-\lambda) \geq  \left(2v_\mathfrak{p}(a_1)/3\right) + 1$ and \item $v_\mathfrak{q}(\alpha) \geq 0$ at all other finite places $\mathfrak{q} \neq \mathfrak{p}$ of $\mathbb{F}_q(x)$. \end{itemize} 
\vspace{4mm}

As $v_\mathfrak{p}(\alpha- \lambda) \geq  \left(2v_\mathfrak{p}(a_1)/3\right) + 1 = v_\mathfrak{p}(\lambda) + 1 > v_\mathfrak{p}(\lambda)$, the first condition implies by the non-Archimedean property $$v_\mathfrak{p}(\alpha) =v_\mathfrak{p}(\alpha- \lambda + \lambda) = \min\{v_\mathfrak{p}(\alpha- \lambda),v_\mathfrak{p}(\lambda) \} = v_\mathfrak{p}(\lambda) = 2v_\mathfrak{p}(a_1)/3.$$ We may therefore apply \cite[Lemma 3.18]{MWcubic2}, where $w_0 \in K$ was chosen so that $w_0 \neq - \alpha^{-3} ja_1^2$ and $$v_\mathfrak{p}(\alpha^{-3} ja_1^2 + w_0) > 0.$$ 

As $p = 3$, the map $X \rightarrow X^3$ is an isomorphism of $k(\mathfrak{p})$, so we may find an element $w^* \in \mathbb{F}_q(x)$ such that ${w^*}^3 = w_0 \mod \mathfrak{p}$ (note also that $v_{\mathfrak{p}}(w^*)=0$ by construction). Again by strong approximation, we choose an element $w_1 \in \mathbb{F}_q(x)$ such that \\
 
  \begin{itemize} \item $v_\mathfrak{p}(w_1 - w^*) \geq 1$, so that $w_1 = w^* \mod \mathfrak{p}$, which implies $w_1^3 = {w^*}^3 =  w_0 \mod \mathfrak{p}$. \item $v_\mathfrak{q}(w_1 - 0) \geq v_\mathfrak{q}(a_1) $ for any finite place $\mathfrak{q}$ such that $v_\mathfrak{q}(a_1) > 0$. \item $v_\mathfrak{q}(w_1) \geq 0$ at all other finite places $\mathfrak{q}$. \end{itemize}
  \vspace{4mm}
  
As in \cite[Lemma 3.18]{MWcubic2}, we let $w_2 = \alpha w_1$. By the same argument as in (Ibid.), we have $v_\mathfrak{p}(a_2) > v_\mathfrak{p}(a_1)$. We now observe how the process behaves at the other finite places of $\mathbb{F}_q(x)$: \\
\begin{itemize} \item If $\mathfrak{q}$ is a finite place such that $v_\mathfrak{q}(a_1) > 0$, then $v_\mathfrak{q}(w_2) =v_\mathfrak{q}(\alpha w_1) = v_\mathfrak{q}(\alpha) + v_\mathfrak{q}(w_1) \geq  v_\mathfrak{q}(a_1).$ Then \begin{align*} v_\mathfrak{q}(ja_1^2 + (w_2^3 + a_1 w_2)) &\geq \min\{v_\mathfrak{q}(ja_1^2 + w_2^3  ) ,v_\mathfrak{q}(a_1 w_2) \} = 2 v_\mathfrak{q}(a_1).\end{align*} Thus $$v_\mathfrak{q}(a_2)  = v_\mathfrak{q}\left(\frac{(ja_1^2 + (w_2^3 + a_1 w_2) )^2}{a_1^3}\right) \geq 4v_\mathfrak{q}(a_1) - v_\mathfrak{q}(a_1^3) = v_\mathfrak{q}(a_1) > 0.$$ \item If $\mathfrak{q}$ is a finite place such that $v_\mathfrak{q}(a_1) = 0$, then $v_\mathfrak{q}(w_1) \geq 0$, so that \begin{align*} v_\mathfrak{q}(ja_1^2 + (w_2^3 + a_1 w_2)) \geq \min\{v_\mathfrak{q}(ja_1^2),v_\mathfrak{q}(w_2^3 + a_1 w_2))\}  = 0. \end{align*} 
 Hence $$v_\mathfrak{q}(a_2)  = v_\mathfrak{q}\left(\frac{(ja_1^2 + (w_2^3 + a_1 w_2) )^2}{a_1^3}\right) \geq 0- v_\mathfrak{q}(a_1^3) = 0.$$ \item If $\mathfrak{q} \neq \mathfrak{p}$ is a finite place of $\mathbb{F}_q(x)$ such that $v_\mathfrak{q}(a_1) < 0$, then $v_\mathfrak{q}(w_2) \geq 0$, so that $$v_\mathfrak{q}(ja_1^2) < v_\mathfrak{q}(w_2^3 + a_1 w_2),$$ whence\begin{align*} v_\mathfrak{q}(ja_1^2 + (w_2^3 + a_1 w_2))  = \min\{v_\mathfrak{q}(ja_1^2),  v_\mathfrak{q}(w_2^3 + a_1 w_2)  = v_\mathfrak{q}(ja_1^2) = 2v_\mathfrak{q}(a_1). \end{align*} So $$v_\mathfrak{q}(a_2)  = v_\mathfrak{q}\left(\frac{(ja_1^2 + (w_2^3 + a_1 w_2) )^2}{a_1^3}\right) =  4v_\mathfrak{q}(a_1)- v_\mathfrak{q}(a_1^3) = v_\mathfrak{q}(a_1).$$   \end{itemize} 
 Summarising, we obtain \\
 
  \begin{itemize} \item $v_\mathfrak{p}(a_2) > v_\mathfrak{p}(a_1)$. \item  If $\mathfrak{q}$ is a finite place such that $v_\mathfrak{q}(a_1) > 0$, then $v_\mathfrak{q}(a_2) > 0$. \item If $\mathfrak{q}$ is a finite place such that $v_\mathfrak{q}(a_1) = 0$, then $v_\mathfrak{q}(a_2) \geq 0$. \item If $\mathfrak{q} \neq \mathfrak{p}$ is a finite place of $\mathbb{F}_q(x)$ such that $v_\mathfrak{q}(a_1) < 0$, then $v_\mathfrak{q}(a_2) = v_\mathfrak{q}(a_1)$. \end{itemize}
  \vspace{4mm}
  
  We now repeat this process as $\mathfrak{p}$ to obtain $y$ and $a$ as desired. Once $a$ is obtained for $\mathfrak{p}$, we may choose another finite place $\mathfrak{p}'$ (if it exists) of $K$ such that $v_{\mathfrak{p}'}(a) < 0$ and $3 \mid  v_{\mathfrak{p}'}(a)$, and perform the same process with initial generator $y$ and equation $y^3 + a y + a^2 = 0$, obtaining some $y'$ and $a'$ such that $$y'^3 + a' y' + a'^2 = 0.$$ In addition to satisfying the requisite valuation conditions at $\mathfrak{p}$, the element $a'$ also satisfies these conditions at $\mathfrak{p}'$, i.e., $v_{\mathfrak{p}'}(a') \geq 0$ or $v_{\mathfrak{p}'}(a') < 0$ and $(v_{\mathfrak{p}'}(a'),3) = 1$. By construction, all other valuations at finite places remain nonnegative or stable in this algorithm. We therefore repeat this process until we find a $b \in \mathbb{F}_q(x)$ and generator $z$ of $L/\mathbb{F}_q(x)$ as desired.\end{proof}
  
  We may now determine $(\partial_{L/\mathbb{F}_q(x)})_{\mathbb{F}_q[x]}$ in terms of the element $b \in \mathbb{F}_q(x)$ of Lemma \ref{3standard}.
  
  \begin{lemme} \label{3disc} Let $b \in \mathbb{F}_q(x)$ be as in Lemma \ref{3standard}. We denote $$b= \frac{\xi_1 \xi_2^2}{\beta},$$ where $\xi_1, \xi_2, \beta \in \mathbb{F}_q[x]$, $\xi_1$ is square-free, and $(\xi_1 \xi_2, \beta) = 1$.  Then \begin{align*} (\partial_{L/\mathbb{F}_q(x)})_{\mathbb{F}_q[x]} = 
\prod_{\mathfrak{p} | (\xi_1)_{\mathbb{F}_q[x]}} \mathfrak{p} \prod_{\mathfrak{p} | (\beta)_{\mathbb{F}_q[x]}} \mathfrak{p}^{-v_\mathfrak{p}(b) + 2} .\end{align*} 

  \end{lemme}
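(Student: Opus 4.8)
The plan is to read off $(\partial_{L/\mathbb{F}_q(x)})_{\mathbb{F}_q[x]}$ directly from \eqref{normdisc}, which reduces the problem to computing, at each ramified finite place, the local data $e(\mathfrak{P}|\mathfrak{p})$, $f(\mathfrak{P}|\mathfrak{p})$, and the differential exponent $d(\mathfrak{P}|\mathfrak{p})$. Working with the generator $z$ satisfying $z^3 + bz + b^2 = 0$ from Lemma \ref{3standard} and the factorisation $b = \xi_1\xi_2^2/\beta$, I would first partition the finite places of $\mathbb{F}_q(x)$ according to $v_\mathfrak{p}(b)$: the places with $v_\mathfrak{p}(b) = 0$, the zeros $\mathfrak{p}\mid(\xi_1\xi_2)_{\mathbb{F}_q[x]}$ with $v_\mathfrak{p}(b) > 0$, and the poles $\mathfrak{p}\mid(\beta)_{\mathbb{F}_q[x]}$ with $v_\mathfrak{p}(b) < 0$. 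A useful preliminary observation is that, in characteristic $3$, the discriminant of $X^3 + bX + b^2$ reduces to $-b^3$, so $v_\mathfrak{p}(\Delta(z)) = 3v_\mathfrak{p}(b)$; in particular every place with $v_\mathfrak{p}(b) = 0$ is unramified and contributes nothing, and the entire computation concentrates on the zeros and poles of $b$.

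For the poles $\mathfrak{p}\mid(\beta)_{\mathbb{F}_q[x]}$, the standard form of Lemma \ref{3standard} guarantees $(v_\mathfrak{p}(b),3) = 1$, which is exactly the hypothesis needed to invoke \cite[Theorem 3.19/20]{MWcubic2}: such a place is totally (wildly) ramified, so $e(\mathfrak{P}|\mathfrak{p}) = 3$ and $f(\mathfrak{P}|\mathfrak{p}) = 1$ with a single $\mathfrak{P}$ above $\mathfrak{p}$, and the differential exponent is $d(\mathfrak{P}|\mathfrak{p}) = -v_\mathfrak{p}(b) + 2$. Each such place therefore contributes the factor $\mathfrak{p}^{-v_\mathfrak{p}(b)+2}$ to \eqref{normdisc}. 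I expect this to be the crux of the argument: because we are in characteristic $3$, the formal derivative $g'(z) = 3z^2 + b = b$ vanishes to the wrong order, so $\mathcal{O}_\mathfrak{p}[z]$ is strictly smaller than the local ring of integers and the exponent $-v_\mathfrak{p}(b)+2$ cannot be obtained by naively differentiating the minimal polynomial; supplying this correct value is precisely the content of \cite[Theorem 3.19/20]{MWcubic2}.

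For the zeros $\mathfrak{p}\mid(\xi_1\xi_2)_{\mathbb{F}_q[x]}$, I would determine the factorisation of $\mathfrak{p}$ in $L$ by the parity of $v_\mathfrak{p}(b)$. Since $\xi_1$ is square-free and $v_\mathfrak{p}(b) = v_\mathfrak{p}(\xi_1) + 2v_\mathfrak{p}(\xi_2)$, the valuation $v_\mathfrak{p}(b)$ is odd precisely when $\mathfrak{p}\mid(\xi_1)_{\mathbb{F}_q[x]}$. The Newton polygon of $X^3 + bX + b^2$ at such a place is governed by the points $(0,2v_\mathfrak{p}(b))$, $(1,v_\mathfrak{p}(b))$, $(3,0)$, giving a length-one segment of integral slope $-v_\mathfrak{p}(b)$ (an unramified degree-one factor) and a length-two segment of slope $-v_\mathfrak{p}(b)/2$. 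When $v_\mathfrak{p}(b)$ is odd this slope has denominator $2$, so the corresponding factor is tamely ramified with $e = 2$, $f = 1$, whence $d(\mathfrak{P}|\mathfrak{p}) = e - 1 = 1$ and $\mathfrak{p}$ contributes $\mathfrak{p}$; when $v_\mathfrak{p}(b)$ is even the slope is integral and the factor is unramified, so $\mathfrak{p}$ contributes nothing, consistent with $\xi_2$ being absent from the stated answer.

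Assembling the three families in \eqref{normdisc} then gives
$$(\partial_{L/\mathbb{F}_q(x)})_{\mathbb{F}_q[x]} = \prod_{\mathfrak{p}\mid(\xi_1)_{\mathbb{F}_q[x]}} \mathfrak{p} \prod_{\mathfrak{p}\mid(\beta)_{\mathbb{F}_q[x]}} \mathfrak{p}^{-v_\mathfrak{p}(b)+2},$$
as claimed. Beyond the wild computation at the poles, the only points requiring care are verifying that the even-valuation zeros and the places with $v_\mathfrak{p}(b) = 0$ are genuinely unramified; both follow cleanly from the discriminant reducing to $-b^3$ together with the integrality of the relevant Newton polygon slopes.
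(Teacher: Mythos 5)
Your proposal is correct and its overall skeleton matches the paper's: both arguments reduce everything to \eqref{normdisc}, split the finite places according to the sign of $v_\mathfrak{p}(b)$, take the wild differential exponent $d(\mathfrak{P}|\mathfrak{p})=-v_\mathfrak{p}(b)+2$ at the poles of $b$ from the cited results of \cite{MWcubic2} (the paper uses Theorem 3.19 for total ramification and Lemma 3.26 for the exponent), and record that in each contributing case a single place $\mathfrak{P}$ with $f(\mathfrak{P}|\mathfrak{p})=1$ carries the whole exponent. Where you genuinely diverge is at the zeros of $b$: the paper simply quotes \cite[Theorem 3.20]{MWcubic2} for ``$e=2$ at a single place when $v_\mathfrak{p}(b)>0$ is odd'' and \cite[Lemma 3.18]{MWcubic2} for $d=1$ there, whereas you rederive this from the Newton polygon of $X^3+bX+b^2$ (vertices $(0,2m)$, $(1,m)$, $(3,0)$, hence a length-one segment of slope $-m$ and a length-two segment of slope $-m/2$) plus tameness of $e=2$ in characteristic $3$. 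That buys a self-contained and transparent local analysis, and your observation that the discriminant degenerates to $-b^3$ in characteristic $3$ gives a cleaner dismissal of the places with $v_\mathfrak{p}(b)=0$ than the paper offers. One step you should tighten: for even $v_\mathfrak{p}(b)>0$, a length-two Newton-polygon segment of integral slope does not by itself force the corresponding quadratic factor to be unramified (it could a priori be ramified with both roots of the same integral valuation). You need the supplementary parity argument you only gesture at: if that factor were ramified it would be tame with $e=2$, giving $v_\mathfrak{p}(\partial_{L/\mathbb{F}_q(x)})=1$, which is incompatible with $v_\mathfrak{p}(\Delta(z))=3v_\mathfrak{p}(b)$ being even and $\Delta(z)=\mathrm{Ind}(z)^2\,\partial_{L/\mathbb{F}_q(x)}$. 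With that sentence added, your argument is complete and slightly more self-contained than the paper's.
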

 
\begin{proof} By \cite[Theorem 3.19]{MWcubic2}, whenever $v_\mathfrak{q}(b) < 0$ and $(v_\mathfrak{q}(b) ,3) = 1$, $\mathfrak{q}$ is fully ramified, and by \cite[Theorem 3.20]{MWcubic2}, $e(\mathfrak{Q}|\mathfrak{q}) = 2$ for a single place $\mathfrak{Q}|\mathfrak{p}$ whenever $v_\mathfrak{q}(b) > 0$ and $(v_\mathfrak{q}(b) ,2) = 1$. For the fully ramified places, by \cite[Lemma 3.26]{MWcubic2}, we have $$d ( \mathfrak{P}| \mathfrak{p}) = -v_{\mathfrak{p}}(b )+2.$$
We also have by \cite[Lemma 3.18]{MWcubic2} that $d(\mathfrak{P}|\mathfrak{p}) = 1$ whenever $v_\mathfrak{p}(b) > 0$ and $(v_\mathfrak{p}(b),2) = 1$. These are the two cases where a finite place occurs in the discriminant of $L/\mathbb{F}_q(x)$. In either case, there exists only one place $\mathfrak{P}$ of $L$ above $\mathfrak{p}$ such that $d(\mathfrak{P}|\mathfrak{p})\neq 0$, and for such $\mathfrak{P}$, we again have $f(\mathfrak{P}|\mathfrak{p}) = 1$. Thus we find from \eqref{normdisc} that
 \begin{align*} (\partial_{L/\mathbb{F}_q(x)})_{\mathbb{F}_q[x]} = 
\prod_{\mathfrak{p} | (\xi_1)_{\mathbb{F}_q[x]}} \mathfrak{p} \prod_{\mathfrak{p} | (\beta)_{\mathbb{F}_q[x]}} \mathfrak{p}^{-v_\mathfrak{p}(b) + 2}  \end{align*} \end{proof} 

\section{Explicit triangular integral bases} 
In this section, we demonstrate the existence of an explicit triangular integral basis for a separable cubic extension $L/\mathbb{F}_q(x)$. We divide the argument between $p\neq 3$ and $p=3$, as the construction in each case is different. 

\subsection{$p\neq 3$} 

 Let $y$ be a generator of $L/\mathbb{F}_q(x)$ which satisfies the equation $y^3 - 3y - a=0$, where $a \in \mathbb{F}_q(x)$, which exists by \cite[Corollary 1.3]{MWcubic2}. 
 We let $a= \alpha/(\gamma^3 \beta)$, where $(\alpha,\beta\gamma) = 1$, $\beta$ is cube-free, and $\beta = \beta_1 \beta_2^2$, where $\beta_1$ and $\beta_2$ are square-free. As in \S 1.1, we let $\omega = \gamma \beta_1 \beta_2 y$ and $(4\gamma^6 \beta^2 - \alpha^2)=\eta_1 \eta_2^2$, where $\eta_1$ is square-free. The element $\omega$ is integral over $\mathbb{F}_q[x]$, and the minimal polynomial of $\omega$ over $\mathbb{F}_q(x)$ is equal to
$$X^3 - 3\gamma^2 \beta_1^2 \beta_2^2 X- \alpha \beta_1^2 \beta_2.$$ We let $r$ denote a root of the quadratic resolvent of this cubic polynomial. We also let:
\begin{itemize} 
\item[$\cdot$] If $p\neq 2$, $I= \beta_1^2 \eta_2^2$, 
\item[$\cdot$] If $p= 2$, $I = \beta_1 \alpha_I$, where 
\begin{itemize}
\item $ \alpha_I= \prod_{i=1}^s \alpha_i^{v_{\mathfrak{p}_i}(\alpha) - \frac{1}{2}(\ell_{\mathfrak{p}_i} + 1)}$, when $L/\mathbb{F}_q(x)$ is not Galois,
\item $ \alpha_I= \alpha $, when $L/\mathbb{F}_q(x)$ is Galois,
\end{itemize}
where we write $\alpha$ as a product of irreducible polynomials $\alpha_i$ ($i=1,\ldots,s$) as $$\alpha = \prod_{i=1}^s \alpha_i^{v_{\mathfrak{p}_i}(\alpha) },$$ where $\alpha_i$ corresponds to the (finite) place $\mathfrak{p}_i$ of $\mathbb{F}_q(x)$ and $\ell_{\mathfrak{p}_i}$ is defined as in Lemma \ref{indlemma}.
\end{itemize} 
This machinery allows us to obtain the following description of the explicit triangular integral basis of $\mathcal{O}_{L,x}/\mathbb{F}_q[x]$. 

\begin{theoreme} \label{3primetoqbasis}
Let $p \neq 3$. The set $$\mathfrak{B} = \left\{ 1, \omega+ S, \frac{1}{I} (\omega^2+ T\omega +V  ) \right\}$$ forms an integral basis of $L/ \mathbb{F}_q(x)$, if $S, T,V \in \mathbb{F}_q[x]$, $ V \equiv T^2 -3(\gamma \beta_1 \beta_2)^2 \mod I$, and $T$ is chosen in the following manner, which may be done by the Chinese remainder theorem:
\begin{enumerate} 
\item When $p \neq 2$,  $$T \equiv -\frac{  \alpha  }{2 \gamma^2 \beta_2}  \mod \eta_2^2\quad\text{ and } \quad T\equiv 0 \mod \beta_1^2;$$
\item When $p = 2$, \begin{enumerate} \item If $L/\mathbb{F}_q(x)$ is Galois, $$T \equiv 0 \mod \beta_1 \quad\text{ and }\quad  T \equiv \frac{r}{  \gamma^2 \beta_1^2 \beta_2^2} \mod \alpha_I;$$  \item If $L/\mathbb{F}_q(x)$ is not Galois, $$T \equiv 0 \mod \beta_1 \quad \text{ and }\quad T \equiv \frac{\alpha c}{\gamma^2 \beta_2}\mod \alpha_I.$$ \end{enumerate}
\end{enumerate} 
\end{theoreme}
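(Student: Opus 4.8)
The plan is to verify the two standard conditions characterizing an integral basis of the free rank-$3$ module $\mathcal{O}_{L,x}$ over the principal ideal domain $\mathbb{F}_q[x]$: that every element of $\mathfrak{B}$ lies in $\mathcal{O}_{L,x}$, and that the discriminant of $\mathfrak{B}$ equals $(\partial_{L/\mathbb{F}_q(x)})_{\mathbb{F}_q[x]}$. The transition matrix from the power basis $1,\omega,\omega^2$ to $\mathfrak{B}$ is lower triangular with determinant $I^{-1}$, so $\Delta(\mathfrak{B}) = I^{-2}\Delta(\omega) = I^{-2}\,\mathrm{Ind}(\omega)^2\,(\partial_{L/\mathbb{F}_q(x)})_{\mathbb{F}_q[x]}$ by \eqref{ind}; with the value of $\mathrm{Ind}(\omega)$ determined in Lemma \ref{indlemma} one checks that this equals the discriminant divisor, so that once integrality is known the index $[\mathcal{O}_{L,x}:\langle\mathfrak{B}\rangle]$ is forced to be trivial and $\mathfrak{B}$ is a basis. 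Thus the whole burden falls on integrality, and since $1$ and $\omega+S$ are patently integral, everything reduces to showing $\theta:=I^{-1}(\omega^2+T\omega+V)\in\mathcal{O}_{L,x}$.

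To test integrality of $\theta$ I would write down its characteristic polynomial over $\mathbb{F}_q(x)$ using the conjugates $\omega_1,\omega_2,\omega_3$ of $\omega$, which satisfy $\omega_1+\omega_2+\omega_3=0$, $\sum_{i<j}\omega_i\omega_j=-3\gamma^2\beta_1^2\beta_2^2$, and $\omega_1\omega_2\omega_3=\alpha\beta_1^2\beta_2$. Setting $p_i=\omega_i^2+T\omega_i+V$, integrality of $\theta$ is equivalent to $I\mid e_1(p)$, $I^2\mid e_2(p)$, and $I^3\mid e_3(p)$, where the $e_j$ are the elementary symmetric functions; each $e_j(p)$ is an explicit polynomial in $T,V$ and the coefficients of the minimal polynomial of $\omega$, obtained from the power sums of the $\omega_i$. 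The congruence $V\equiv T^2-3(\gamma\beta_1\beta_2)^2 \bmod I$ is engineered so that the quantity $T^2-V-3\gamma^2\beta_1^2\beta_2^2$ vanishes modulo $I$, which simultaneously clears the trace condition and collapses $e_3(p)$ to (essentially) the square of the norm-type term $TV-\alpha\beta_1^2\beta_2$; what then remains is a condition on $T$ alone modulo the prime-power factors of $I$.

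These residual conditions I would check one prime at a time, drawing the local ramification data from Lemma \ref{indlemma}. When $p\neq 2$ the primes dividing $I$ are those dividing $\beta_1$ and $\eta_2$; at the former the choice $T\equiv 0 \bmod \beta_1^2$ suffices, while at the latter one may complete the square because $2$ is invertible, and the choice $T\equiv -\alpha/(2\gamma^2\beta_2) \bmod \eta_2^2$ is exactly what forces the remaining divisibility, the two being combined by the Chinese remainder theorem. When $p=2$ the same bookkeeping applies with $\eta_2$ replaced by $\alpha_I$, but the residue of $T$ must instead be read off from the root $r$ of the quadratic resolvent and the Artin--Schreier standard form of Lemma \ref{indlemma}(b); here the Galois and non-Galois cases diverge, accounting for the two displayed congruences for $T$ and the two shapes of $\alpha_I$.

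The main obstacle is the characteristic-two case. There the completion-of-the-square device is unavailable, so the correct residue of $T$ cannot be found by a direct algebraic manipulation of the minimal polynomial; instead it must be extracted from the explicit Artin--Schreier generator and the resolvent root $r$, and one must separately confirm that $\alpha_I$ is genuinely the exact local denominator predicted by $\mathrm{Ind}(\omega)$ in Lemma \ref{indlemma}(b) in each of the Galois and non-Galois cases. Once the three divisibilities $I\mid e_1(p)$, $I^2\mid e_2(p)$, and $I^3\mid e_3(p)$ are secured, integrality of $\theta$ follows, and the discriminant computation of the first paragraph then upgrades $\mathfrak{B}$ from an integral set of full rank to an integral basis, completing the proof.
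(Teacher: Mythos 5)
Your proposal is correct and follows essentially the same route as the paper: reduce everything to the integrality of $\frac{1}{I}(\omega^2+T\omega+V)$ via the determinant/discriminant computation, recast integrality as the two congruence conditions on $T$ (which the paper imports from Scheidler's lemma and you re-derive from the elementary symmetric functions of the conjugates), and then verify those congruences prime by prime --- trivially along $\beta_1$, by inverting $2$ along $\eta_2$ when $p\neq 2$, and via the quadratic-resolvent root $r$ or the Artin--Schreier standard-form element $c$ along $\alpha_I$ when $p=2$. The only slip is the claim that the congruence on $V$ by itself ``clears the trace condition'': it in fact converts $I\mid e_1(p)$ into the first condition $3T^2\equiv 3(\gamma\beta_1\beta_2)^2 \bmod I$ on $T$, which is exactly the residual condition your prime-by-prime analysis then handles, so the argument goes through as the paper's does.
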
 
\begin{proof}
Since 
\begin{enumerate} 
\item When $p\neq 2$, the result is a direct consequence of \cite[Lemma 6.3]{lotsofpeople} and \S 1.1.
\item Suppose that $p=2$. By \cite[Lemma 3.1, Corollary 3.2]{Scheidler}, which remain valid in characteristic 2, and Lemma \ref{indlemma}, a basis of the form of $\mathfrak{B}$ as in the statement of the theorem
exists if, and only if,  $T, V , S  \in \mathbb{F}_q[x]$ are such that \begin{equation} \label{pneq2T} T^2 +(\gamma \beta_1 \beta_2)^2\equiv 0 \mod  I, \quad \quad T^3 + (\gamma \beta_1 \beta_2)^2T +  \alpha \beta_1^2 \beta_2 \equiv 0 \mod  I^2,\end{equation}
and 
$$ V \equiv T^2 -3(\gamma \beta_1 \beta_2)^2 \mod I.$$
We prove that a choice of $T$ which satisfies the conditions in the statement of the theorem also satisfies \eqref{pneq2T}. 
We note that as $(\beta_1,\alpha_I) = 1$, the first condition in \eqref{pneq2T}, $$T^2 +(\gamma \beta_1 \beta_2)^2\equiv 0 \mod I,$$ 
is equivalent to the two conditions
\begin{equation}\label{I} T^2 \equiv (\gamma \beta_1 \beta_2)^2  \mod \alpha_I \quad \text{ and } \quad T \equiv 0 \mod \beta_1,\end{equation} 
and the second condition in \eqref{pneq2T}  $$T^3 + (\gamma \beta_1 \beta_2)^2T +  \alpha \beta_1^2 \beta_2 \equiv 0 \mod  I^2$$ is equivalent to the two conditions 
\begin{equation} \label{I2} T^3 + (\gamma \beta_1 \beta_2)^2T +  \alpha \beta_1^2 \beta_2 \equiv 0 \mod \alpha_I^2  \quad \text{and} \quad T^3 + (\gamma \beta_1 \beta_2)^2T +  \alpha \beta_1^2 \beta_2 \equiv 0 \mod \beta_1^2. \end{equation} 
First, we note that if $T \equiv 0  \mod \beta_1$, then $T^3 + (\gamma \beta_1 \beta_2)^2T +  \alpha \beta_1^2 \beta_2 \equiv 0 \mod \beta_1^2.$
As $T$ is invertible mod $\alpha_I$ from $(\alpha_I, \gamma \beta_1 \beta_2)=1$, the condition
$$T^3 + (\gamma \beta_1 \beta_2)^2T +  \alpha \beta_1^2 \beta_2 \equiv 0 \mod \alpha_I^2$$ 
is equivalent to 
$$T^4 + (\gamma \beta_1 \beta_2)^2T^2 +  \alpha \beta_1^2 \beta_2 T \equiv 0 \mod \ \alpha_I^2.$$ 
We have 
\begin{align*} 
T^4 + (\gamma \beta_1 \beta_2)^2T^2 +  \alpha \beta_1^2 \beta_2  T \equiv & (\gamma \beta_1 \beta_2)^4 + (\gamma \beta_1 \beta_2)^2T^2 +  \alpha \beta_1^2 \beta_2 T \mod \alpha_I^2\\
\equiv &\beta_1^2 \beta_2 ( \gamma^4 \beta_2^3 \beta_1^2 + \gamma^2 \beta_2 T^2 +  \alpha T)\mod  \alpha_I^2
 \end{align*}
Thus, the condition $$T^3 + (\gamma \beta_1 \beta_2)^2T +  \alpha \beta_1^2 \beta_2 \equiv 0 \mod \ \alpha_I^2$$  
is equivalent to 
$$\gamma^4 \beta_2^3 \beta_1^2 + \gamma^2 \beta_2 T^2 +  \alpha T \equiv 0 \mod  \alpha_I^2.$$ 
 Since $(\gamma^2 \beta_2, \alpha_I)=1$, this condition is in turn equivalent to 
 \begin{equation} \label{T} \gamma^6 \beta_2^4 \beta_1^2 + \gamma^4 \beta_2^2 T^2 +  \alpha \gamma^2 \beta_2 T \equiv 0 \mod  \alpha_I^2\end{equation}
Setting $T'= \gamma^2 \beta_2 T$, we obtain 
$$  T'^2 +  \alpha T' +\gamma^6 \beta_2^4 \beta_1^2 \equiv 0 \mod  \alpha_I^2$$ 
We now note that the quadratic resolvent of the cubic polynomial $X^3 - 3\gamma^2 \beta_1^2 \beta_2^2 X- \alpha \beta_1^2 \beta_2$ is equal to
$$X^2 + \alpha \beta_1^2 \beta_2 X +  \beta_1^4 \beta_2^2 (\gamma^6 \beta_1^2 \beta_2^4 + \alpha^2)$$ 
We denote by $r$ a root of the quadratic resolvent.
Taking $X' = \frac{X}{ \beta_1^2 \beta_2}$, we have that
$$S(X')=X'^2 + \alpha X' + \gamma^6 \beta_1^2 \beta_2^4 + \alpha^2$$ 
is an integral polynomial over $\mathbb{F}_q[x]$. We now divide the argument into the cases where $L/\mathbb{F}_q(x)$ is Galois or not.
\begin{enumerate}[(a)]
\item If $L/\mathbb{F}_q(x)$ is Galois, then $r \in \mathbb{F}_q(x)$, setting $r'=\frac{r}{ \beta_1^2 \beta_2}$,
$S(r')=0$
and 
$$r'^2 +  \alpha r' +\gamma^6 \beta_2^4 \beta_1^2 =-\alpha^2 $$
Choosing $T\equiv \frac{r}{\gamma^2  \beta_1^2 \beta_2^2} \mod \alpha_I$ and $T\equiv 0 \mod \beta_1$ via the Chinese remainder theorem, $T$ satisfies \eqref{pneq2T}, proving the theorem.
\item If $L/\mathbb{F}_q(x)$ is not Galois, then $r \notin \mathbb{F}_q(x)$, and the base change $X''= \frac{X'}{\alpha}$ transforms $S(X')$ into the Artin-Schreier polynomial 
$$X''^2 + X''+ \frac{\gamma^6 \beta_1^2 \beta_2^4 }{\alpha^2} +1 $$ 
Via \cite[Example 5.8.9]{Vil}, we let $c$ be chosen such that $$v_{\mathfrak{p}_i}\left(c^2 +c +\frac{\gamma^6 \beta_1^2 \beta_2^4 }{\alpha^2} +1 \right) =- \ell _{\mathfrak{p}_i}$$ if $\mathfrak{p}_i\mid (\alpha)_{\mathbb{F}_q[x]}$ is ramified in $\mathbb{F}_q(x)(r)$ and  $v_{\mathfrak{p}} (c^2 +c +\frac{\gamma^6 \beta_1^2 \beta_2^4 }{\alpha^2} +1 )  \geq 0$ for all other finite places $\mathfrak{p}$ of $\mathbb{F}_q(x)$. 
It follows that $c' = \alpha c $ is such that 
\begin{equation} \label{firststandard} v_{\mathfrak{p}_i}({c'}^2 +\alpha c' +\gamma^6 \beta_1^2 \beta_2^4  +\alpha^2) = - \ell_{\mathfrak{p}_i} + 2 v_{\mathfrak{p}_i} ( \alpha)\end{equation} if $\mathfrak{p}_i\mid (\alpha)_{\mathbb{F}_q[x]}$ is ramified in $\mathbb{F}_q(x)(r)$, and \begin{equation} \label{secondstandard} v_{\mathfrak{p}}({c'}^2 +\alpha c' +\gamma^6 \beta_1^2 \beta_2^4  +\alpha^2) \geq 2 v_{\mathfrak{p}} ( \alpha)\end{equation} for all other finite places $\mathfrak{p}$ of $\mathbb{F}_q(x)$. As $\ell_{\mathfrak{p}_i} > 0$ if $\mathfrak{p}_i\mid (\alpha)_{\mathbb{F}_q[x]}$ is ramified in $\mathbb{F}_q(x)(r)$, it follows for such $\mathfrak{p}_i$ by the non-Archimedean triangle inequality and \eqref{firststandard} that $$v_{\mathfrak{p}_i}({c'}^2 +\alpha c' +\gamma^6 \beta_1^2 \beta_2^4  ) = \min \{- \ell_{\mathfrak{p}_i} + 2 v_{\mathfrak{p}_i} ( \alpha),2 v_{\mathfrak{p}_i} ( \alpha) \}  = - \ell_{\mathfrak{p}_i} + 2 v_{\mathfrak{p}_i} ( \alpha) \geq -(\ell_{\mathfrak{p}_i} +1 ) +  2 v_{\mathfrak{p}_i} ( \alpha).$$ 
For all other places $\mathfrak{p}$ of $\mathbb{F}_q(x)$ dividing $(\alpha)_{\mathbb{F}_q[x]}$, we have $\ell_{\mathfrak{p}} = -1$. By the non-Archimedean triangle inequality and \eqref{secondstandard}, we obtain $$v_{\mathfrak{p}}({c'}^2 +\alpha c' +\gamma^6 \beta_1^2 \beta_2^4  ) \geq \min\{2 v_{\mathfrak{p}}( \alpha),2 v_{\mathfrak{p}}( \alpha)\} = 2 v_{\mathfrak{p}}( \alpha) = -(\ell_{\mathfrak{p}} +1 ) +  2 v_{\mathfrak{p}} ( \alpha).$$
Thus, for this this choice of $c'$, we have that 
$$  {c'}^2 +  \alpha c' +\gamma^6 \beta_2^4 \beta_1^2 \equiv 0 \mod  \alpha_I^2.$$ 
We now choose $d = \frac{c'}{\gamma^2 \beta_2}$, which yields
$$\gamma^6 \beta_2^4 \beta_1^2 + \gamma^4 \beta_2^2 d^2 +  \alpha \gamma^2 \beta_2 d \equiv 0 \mod  \alpha_I^2$$ 
So that $d$ is a solution of \ref{T}. Finally, via the Chinese remainder theorem, we choose
$$T \equiv 0 \mod \beta_1 \text{ and } T \equiv d \mod \alpha_I,$$ so that \eqref{pneq2T} is again satisfied, proving the theorem. 
\end{enumerate}
\end{enumerate}
\end{proof}

\section{$p=3$} 

We now consider the construction of an explicit triangular integral basis when $p=3$. We let $z$ be a generator of $L/\mathbb{F}_q(x)$ with minimal polynomial $$X^3 + b X + b^2 = 0,$$ where $b$ is as in Lemma \ref{3standard}. We write $$b= \frac{\xi_1 \xi_2^2}{\beta} \quad \text{ and } \quad \beta = \prod_{i=1}^s p_i^{\ell_{\mathfrak{p}_i}}$$ where $p_i\in \mathbb{F}_q[x]$ distinct irreducible polynomials ($i=1,\ldots,s$), with $p_i$ corresponding to the (finite) place $\mathfrak{p}_i$ of $\mathbb{F}_q(x)$, where also $\xi_1, \xi_2  \in \mathbb{F}_q[x]$, $\xi_1$ is square-free, and $(\xi_1 \xi_2, \beta ) = 1$, and $\ell_{\mathfrak{p}_i}>0$, $(\ell_{\mathfrak{p}_i}, 3 ) =1$ for each $i=1,\ldots,s$.

\begin{theoreme} \label{3dividesqbasis}
Let $p=3$. For $j=1,2$, let $$P_j = \prod_{i=1}^s p_i^{1+ \left\lfloor \frac{j 2\ell_{\mathfrak{p}_i}}{3}\right\rfloor},$$ where $\left\lfloor \frac{j 2\ell_{\mathfrak{p}_i}}{3}\right\rfloor$ is the integral part of  $\frac{j2 \ell_{\mathfrak{p}_i}}{3}$. Then the set $$\mathfrak{B}=\left\{ \frac{P_2}{\xi_1\xi_2^2} z^2 , \frac{P_1}{\xi_2} z, 1 \right\}$$ is an integral basis for $L/\mathbb{F}_q(x)$. 
\end{theoreme}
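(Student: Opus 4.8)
The plan is to verify directly that the three elements of $\mathfrak{B}$ lie in $\mathcal{O}_{L,x}$ and then to show that the discriminant of $\mathfrak{B}$ agrees, as an ideal of $\mathbb{F}_q[x]$, with the discriminant divisor $(\partial_{L/\mathbb{F}_q(x)})_{\mathbb{F}_q[x]}$ already computed in Lemma \ref{3disc}. Since $\mathfrak{B}$ is obtained from the $\mathbb{F}_q(x)$-basis $\{1,z,z^2\}$ of $L$ by a diagonal change of basis with nonzero entries, it is automatically an $\mathbb{F}_q(x)$-basis of $L$. Once integrality and the discriminant equality are in hand, the index relation $(\Delta(\mathfrak{B}))_{\mathbb{F}_q[x]} = \mathfrak{a}^2\,(\partial_{L/\mathbb{F}_q(x)})_{\mathbb{F}_q[x]}$, where $\mathfrak{a}$ is the module index of the $\mathbb{F}_q[x]$-span of $\mathfrak{B}$ in $\mathcal{O}_{L,x}$, forces $\mathfrak{a}=(1)$, so that $\mathfrak{B}$ is an integral basis.

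For integrality I would compute $v_{\mathfrak{P}}(z)$ at every finite place $\mathfrak{p}$ of $\mathbb{F}_q(x)$ by reading off the Newton polygon of $z^3 + bz + b^2$, distinguishing the cases of Lemma \ref{3disc}. At $\mathfrak{p}_i \mid (\beta)_{\mathbb{F}_q[x]}$ the place is fully ramified ($e=3$) with $v_{\mathfrak{p}_i}(b) = -\ell_{\mathfrak{p}_i}$, and the single Newton slope gives each root valuation $-2\ell_{\mathfrak{p}_i}/3$, hence $v_{\mathfrak{P}}(z) = -2\ell_{\mathfrak{p}_i}$. Using $v_{\mathfrak{P}}(P_j) = 3\bigl(1 + \lfloor 2j\ell_{\mathfrak{p}_i}/3\rfloor\bigr)$ together with $v_{\mathfrak{P}}(\xi_1) = v_{\mathfrak{P}}(\xi_2) = 0$ (from $(\xi_1\xi_2,\beta)=1$), the valuations of the two nontrivial basis elements collapse to $3 - (2\ell_{\mathfrak{p}_i} \bmod 3)$ and $3 - (\ell_{\mathfrak{p}_i}\bmod 3)$, both positive since $3 \nmid \ell_{\mathfrak{p}_i}$. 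At $\mathfrak{p}\mid(\xi_1)_{\mathbb{F}_q[x]}$ one has $v_{\mathfrak{p}}(b)$ odd and positive, giving tame ramification with $e=2$ for one prime above and an unramified prime for the other by \cite[Theorems 3.19, 3.20]{MWcubic2}; at $\mathfrak{p}\mid(\xi_2)_{\mathbb{F}_q[x]}$ with $\mathfrak{p}\nmid\xi_1\beta$ one has $v_{\mathfrak{p}}(b)$ even and positive and the place is unramified. In each of these cases the Newton polygon yields $v_{\mathfrak{P}}(z) > 0$, and a short valuation count shows that multiplying by $P_1/\xi_2$ and by $P_2/(\xi_1\xi_2^2)$ leaves all valuations $\geq 0$. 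Finally, at places with $v_{\mathfrak{p}}(b)=0$ the element $z$ is integral and all scaling factors are units, so integrality is immediate.

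For the discriminant I would use $\Delta(\mathfrak{B}) = \bigl(P_1P_2/(\xi_1\xi_2^3)\bigr)^2\,\Delta(z)$, the diagonal transformation contributing the square of the product of the leading coefficients. In characteristic $3$ the cubic $X^3 + bX + b^2$ has no quadratic term and the term $-27d^2$ of the discriminant vanishes, leaving $\Delta(z) = -4b^3 = 2b^3$; since $2$ is a unit, $(\Delta(z))_{\mathbb{F}_q[x]} = (\xi_1^3\xi_2^6/\beta^3)_{\mathbb{F}_q[x]}$ and hence $(\Delta(\mathfrak{B}))_{\mathbb{F}_q[x]} = \bigl(P_1^2P_2^2\,\xi_1/\beta^3\bigr)_{\mathbb{F}_q[x]}$. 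Comparing with Lemma \ref{3disc} prime by prime, the primes dividing $\xi_1$ occur to the first power on both sides, while at each $\mathfrak{p}_i\mid\beta$ the equality of exponents reduces to the elementary identity $\lfloor 2\ell/3\rfloor + \lfloor 4\ell/3\rfloor = 2\ell - 1$, valid for every integer $\ell$ with $3\nmid\ell$ because the two fractional parts sum to $1$; this yields exponent $\ell_{\mathfrak{p}_i}+2 = -v_{\mathfrak{p}_i}(b)+2$, exactly as in Lemma \ref{3disc}.

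I expect the main obstacle to be the integrality verification at the places dividing $\xi_1$ and $\xi_2$, where the ramification is tame rather than total and the valuation of $z$ must be extracted carefully from the Newton polygon. This is also precisely where the exponents $1 + \lfloor 2j\ell_{\mathfrak{p}_i}/3\rfloor$ earn their keep: making them any smaller would destroy integrality, and making them any larger would inflate $(\Delta(\mathfrak{B}))_{\mathbb{F}_q[x]}$ beyond $(\partial_{L/\mathbb{F}_q(x)})_{\mathbb{F}_q[x]}$ and prevent $\mathfrak{B}$ from being a basis, so the floor-function choice is exactly calibrated to thread both constraints simultaneously.
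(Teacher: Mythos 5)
Your proposal is correct and follows the same overall skeleton as the paper (verify integrality of each element of $\mathfrak{B}$, then show that the discriminant of $\mathfrak{B}$ coincides with $(\partial_{L/\mathbb{F}_q(x)})_{\mathbb{F}_q[x]}$ from Lemma \ref{3disc}, so the module index is trivial), and your discriminant computation $(\Delta(\mathfrak{B}))_{\mathbb{F}_q[x]} = (P_1^2P_2^2\xi_1/\beta^3)_{\mathbb{F}_q[x]}$ together with the identity $\lfloor 2\ell/3\rfloor + \lfloor 4\ell/3\rfloor = 2\ell-1$ for $3\nmid\ell$ is exactly the paper's closing step. Where you genuinely diverge is in the central technical point, the computation of $v_{\mathfrak{P}}(z)$ at the ramified places: you read the root valuations off the Newton polygon of $X^3+bX+b^2$ over each completion, whereas the paper first normalises to $\mathfrak{y}=z/\xi_2$ satisfying $\mathfrak{y}^3+\gamma\mathfrak{y}+\gamma^2\xi_2=0$ with $\gamma=\xi_1/\beta$, adjoins $g$ with $g^2=-\gamma$, and exploits the fact that $\mathfrak{y}/g$ satisfies the Artin--Schreier equation $(\mathfrak{y}/g)^3-(\mathfrak{y}/g)+g\xi_2=0$ over $\mathbb{F}_q(x)(g)$, so that the valuations follow from Kummer and Artin--Schreier ramification theory in the tower (with a case split on whether $v_{\mathfrak{p}}(\beta)$ is odd or even). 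Your Newton-polygon route is more elementary and self-contained --- it delivers $v_{\mathfrak{P}}(z)=-2\ell_{\mathfrak{p}_i}$ at the fully ramified places in one stroke, without the parity case split, and handles the tame places over $\xi_1$ and the unramified places over $\xi_2$ uniformly --- while the paper's tower argument is chosen to fit its global theme of standard forms in the Galois closure and reuses machinery already set up for Lemmas \ref{3standard} and \ref{3disc}. Both yield identical valuations, so the two proofs are interchangeable.
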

\begin{proof} 
Let $\mathfrak{y} = \frac{z}{\xi_2}$ and $\gamma =\frac{\xi_1}{\beta}$. We have 
$$\mathfrak{y}^3 + \gamma \mathfrak{y} +  \gamma^2 \xi_2=0.$$
The discriminant of $\mathfrak{y}$ is equal to $\Delta (\mathfrak{y})=2 \gamma^3$, and by Lemma \ref{3disc}, we have \begin{align*} (\partial_{L/\mathbb{F}_q(x)})_{\mathbb{F}_q[x]} = 
\prod_{\mathfrak{p} | (\xi_1)_{\mathbb{F}_q[x]}} \mathfrak{p} \prod_{\mathfrak{p} | (\beta)_{\mathbb{F}_q[x]}} \mathfrak{p}^{-v_\mathfrak{p}(b) + 2}  = 
(\xi_1)_{\mathbb{F}_q[x]} \prod_{\mathfrak{p} | (\beta)_{\mathbb{F}_q[x]}} \mathfrak{p}^{-v_\mathfrak{p}(b) + 2}= (\xi_1)_{\mathbb{F}_q[x]} \prod_{\mathfrak{p} | (\beta)_{\mathbb{F}_q[x]}} \mathfrak{p}^{-\ell_{\mathfrak{p}} + 2} \end{align*}


We may thus write $$\mathfrak{B}=\left\{ \frac{P_2}{\xi_1} \mathfrak{y}^2 , P_1 \mathfrak{y}, 1 \right\}.$$ We let $\beta = \beta_1 \beta_2^2$, where $\beta_1$ is square-free. Let $g$ be chosen in the algebraic closure of $\mathbb{F}_q(x)$ to satisfy $g^2 =- \gamma$. By Kummer theory (see \cite[Example 5.8.9]{Vil}), the finite places $\mathfrak{p}$ of $ \mathbb{F}_q(x)$ ramified in $  \mathbb{F}_q(x)(g)$ are those such that $\mathfrak{p} | ( \xi_1 \beta_1)_{\mathbb{F}_q[x]}$. 
In what follows, $\mathfrak{p}$ denotes a finite place of $\mathbb{F}_q(x)$, $\mathfrak{p}_g$ a place of $\mathbb{F}_q(x)(g)$ above $\mathfrak{p}$, $\mathfrak{P}$ a place of $L$ above $\mathfrak{p}$ and $\mathfrak{P}_g$ a place of $L(g)$ above $\mathfrak{p}_g$ and $\mathfrak{P}$.

By definition, the element $ \frac{\mathfrak{y}}{g}$ satisfies the following Artin-Schreier equation above $\mathbb{F}_q(x)(g)$:
$$\left( \frac{\mathfrak{y}}{g}\right)^3 - \frac{\mathfrak{y}}{g} +  g \xi_2=0.$$ 
Let $\mathfrak{p}| (\beta_1 )_{\mathbb{F}_q[x]} $. Thus, $v_{\mathfrak{p}_g} ( g)= v_{\mathfrak{p}} ( \gamma )<0$ and $(v_{\mathfrak{p}_g} ( g),3)=1$. Hence $e(\mathfrak{P}_g | \mathfrak{p}_g)=3$ by Artin-Schreier theory, and by the non-Archimedean triangular inequality, $v_{\mathfrak{P}_g} (\frac{\mathfrak{y}}{g} ) = v_{\mathfrak{p}} ( \gamma )$, so that
\begin{align*} v_{\mathfrak{P}_g} (\mathfrak{y} ) &= v_{\mathfrak{p}} ( \gamma )+ v_{\mathfrak{P}_g} ( g)\\
&= v_{\mathfrak{p}} ( \gamma )+ e(\mathfrak{P}_g| \mathfrak{p}_g) v_{\mathfrak{p}_g} ( g)\\
&= v_{\mathfrak{p}} ( \gamma )+ 3v_{\mathfrak{p}} ( \gamma )\\
 &= 4v_{\mathfrak{p}} ( \gamma )\end{align*}
and 
$$v_{\mathfrak{P}} (\mathfrak{y} )=2 v_{\mathfrak{p}} ( \gamma )=-2\ell_\mathfrak{p}.$$
Then, for $j=1,2$, 
$$v_{\mathfrak{P}} \left(P_j \frac{\mathfrak{y}^j}{\xi_1}\right) = jv_{\mathfrak{P}} (\mathfrak{y} ) + v_{\mathfrak{P}} (P_j )= -2j \ell_{\mathfrak{p}}+3 \left(1+ \left\lfloor \frac{j 2\ell_{\mathfrak{p}}}{3}\right\rfloor \right) \geq 0$$
Suppose that $\mathfrak{p}| (\beta)_{\mathbb{F}_q[x]} $ and $\mathfrak{p}\nmid (\beta_1)_{\mathbb{F}_q[x]} $. Then $v_{\mathfrak{p}_g} ( g)= \frac{v_{\mathfrak{p}} ( \gamma )}{2}<0$ and $(v_{\mathfrak{p}_g} ( g),3)=1$. Hence again $e(\mathfrak{P}_g | \mathfrak{p}_g)=3$ by Artin-Schreier theory, and by the non-Archimedean triangle inequality, $v_{\mathfrak{P}_g} (\frac{\mathfrak{y}}{g} ) = \frac{v_{\mathfrak{p}} ( \gamma )}{2}$, so that
\begin{align*} v_{\mathfrak{P}_g} (\mathfrak{y} ) &= \frac{v_{\mathfrak{p}} ( \gamma )}{2}+ v_{\mathfrak{P}_g} ( g)\\ &= \frac{v_{\mathfrak{p}} ( \gamma )}{2}+ 3 \frac{v_{\mathfrak{p}} ( \gamma)}{2}\\ &= 2v_{\mathfrak{p}} ( \gamma )\end{align*}
and 
$$v_{\mathfrak{P}} (\mathfrak{y} )=2 v_{\mathfrak{p}} ( \gamma )=-2\ell_\mathfrak{p}.$$
Then, for $j=1,2$, $$v_{\mathfrak{P}} \left(P_j \frac{\mathfrak{y}^j}{\xi_1}\right) = 2v_{\mathfrak{P}} (\mathfrak{y} ) + v_{\mathfrak{P}} (P_j )= -2j \ell_{\mathfrak{p}}+3 \left(1+ \left\lfloor \frac{j 2\ell_{\mathfrak{p}}}{3}\right\rfloor \right)\geq 0.$$
If $\mathfrak{p}| (\xi_1)_{\mathbb{F}_q[x]}$, then $v_{\mathfrak{p}_g} ( g)= v_{\mathfrak{p}} ( \gamma )= v_{\mathfrak{p}} ( \xi_1)=1$, as $\xi_1$ is square free. By the non-Archimedean triangle inequality, we find that either $v_{\mathfrak{P}_g} ( \frac{\mathfrak{y}}{g}) =0$ or $v_{\mathfrak{P}_g} ( \frac{\mathfrak{y}}{g}) =v_{\mathfrak{p}} ( \gamma \xi_2 )$. In both cases, $v_{\mathfrak{P}_g} ( \mathfrak{y}) \geq v_{\mathfrak{P}_g} ( g)>0$, whence $v_{\mathfrak{P}} ( \mathfrak{y}) >0$ and $v_{\mathfrak{P}} ( \mathfrak{y}) \geq 1 = v_{\mathfrak{p}} ( \xi_1)$. Thus, we obtain
 $$v_{\mathfrak{P}} \left(P_2 \frac{\mathfrak{y}^2}{\xi_1}\right) = 2v_{\mathfrak{P}} (\mathfrak{y} )- v_{\mathfrak{P}} (\xi_1 ) =2v_{\mathfrak{P}} (\mathfrak{y} )- e(\mathfrak{P}| \mathfrak{p}) v_{\mathfrak{p}} (\xi_1 )\geq 0,$$ 
where $e(\mathfrak{P}| \mathfrak{p})=2$ by \cite[Theorem 3.20]{MWcubic2}.

Finally, we obtain $$\frac{P_2 P_1 }{\xi_1}= \frac{\prod_{i=1}^s p_i^{2+\sum_{j=1}^2 \left\lfloor \frac{j 2\ell_{\mathfrak{p}_i}}{3}\right\rfloor}}{\xi_1}.$$ We have
 $$2+\sum_{j=1}^2 \left\lfloor \frac{j 2\ell_{\mathfrak{p}_i}}{3}\right\rfloor=2+ 2\ell_{\mathfrak{p}_i} -1 =2\ell_{\mathfrak{p}_i} +1, $$ 
 whence $$\frac{P_2 P_1 }{\xi_1}= \frac{\prod_{i=1}^s p_i^{2\ell_{\mathfrak{p}_i} +1 }}{\xi_1}.$$
We therefore find via the previous argument and Lemma \ref{3disc} that
\begin{align*}
\left(\left( \frac{P_2 P_1 }{\xi_1} \right)^2  \frac{\xi_1^3 }{\beta^3}\right)_{\mathbb{F}_q[x]}&=\left(\frac{\xi_1\prod_{i=1}^s p_i^{4\ell_{\mathfrak{p}_i} +2 } }{\beta^3}\right)_{\mathbb{F}_q[x]} =\left(\xi_1\prod_{i=1}^s p_i^{\ell_{\mathfrak{p}_i} +2 }\right)_{\mathbb{F}_q[x]}\\
&=\left(\xi_1\prod_{i=1}^s p_i^{\ell_{\mathfrak{p}_i} +2}\right)_{\mathbb{F}_q[x]} = (\partial_{L/\mathbb{F}_q(x)})_{\mathbb{F}_q[x]},
\end{align*} which proves that $\mathfrak{B}$ is an integral basis.

\end{proof}

 \bibliographystyle{plain}
\raggedright
\bibliography{references}
\vspace{.5cm}

\end{document}